\title{Non-geometric property (T) of warped cones}
\author{Jintao Deng\thanks{Department of Mathematics, State University of New York at Buffalo, Buffalo, NY, 14260} \and Ryo Toyota\thanks{Department of Mathematics, Texas A\&M University, College Station, TX 77843, USA}  }%
\date{ }
\newtheorem{thm}{Theorem}[section]
\newtheorem{dfn}[thm]{Definition}
\newtheorem{lem}[thm]{Lemma}
\newtheorem{rmk}[thm]{Remark}
\newtheorem{prop}[thm]{Proposition}
\newtheorem{qustn}[thm]{Question}
\newcommand{\HH}{{\mathcal H}}
\newcommand{\ControlledSupport}{{\mathbb{C}_{\text{cs}}}}
\newcommand{\ad}{\text{ad}}
\begin{document}

\maketitle

\begin{abstract}
In this paper, we study the geometric property (T) for discretized warped cones of an action on a compact Lie group  $M$ by its finitely generated subgroup. We show that if a subgroup $G$ is dense in $M$, then the associated discretized warped cone $\bigsqcup_n M\times \{t(n)\}$ does not have geometric property (T) for any sequence of positive numbers $\{t(n)\}_{n\in \mathbb{N}}$ converging to $\infty$. This result applies to certain ergodic actions of groups with property (T), for example, the action of $SO(d,\mathbb{Z}[\frac{1}{5}])$ on $SO(d)$ with $d\geq 5$.

As an application, we obtain new examples of expanders without geometric property (T), including certain superexpanders. 
\end{abstract}

\section{Introduction}

Let $M$ be a compact metric space, and $G$ a finitely generated group acting on $M$. Roe \cite{Roe2005Warped} introduced the concept of warped cone $\mathcal{O}_GM$ to construct exotic examples of metric spaces—for instance, spaces that lack Yu's property A or fail to admit a coarse embedding into Hilbert space. This construction provides a more flexible alternative to box spaces.
It was shown that, under suitable conditions, certain dynamical properties of an action $G \curvearrowright M$ correspond to large-scale geometric properties of the warped cone $\mathcal{O}_G M$, such as amenability of action and Yu's property A (\cite{Roe2005Warped, Sawicki2021Straightening}), dynamical asymptotic dimension and asymptotic dimension (\cite{Sawicki2021Straightening}).

An expander is a sequence of finite, connected graphs with strong connectivity properties, playing a central role in many areas of mathematics (cf.~\cite{Lubotzky1994Discrete}). Vigolo \cite{Federico2019Measure} provided an explicit construction of expanders via warped cones arising from group actions with spectral gap. For a group $G$ with a finite generating set $S$, a probability measure preserving action on a probability measure space $(M,\mu)$ is said to have spectral gap if there exists $\delta>0$ such that for all $\xi\in L^2(M,\mu)$ with $\int_M \xi(x) d\mu(x)=0$, we have
$$\|\xi-\pi(s)\xi\|\geq \delta \|\xi\|$$
for some $s\in S$, where $\pi$ is the unitary representation $\pi:G\rightarrow U(L^2(M,\mu))$ induced by the measure preserving action. Instead of considering the entire warped cone, Vigolo \cite{Federico2019Measure} considered the coarse disjoint union of sections of $\mathcal{O}_GM$ at $t(n)$'s, where $\{t(n)\}_{n \in \mathbb{N}}$ is an increasing sequence of positive reals converging to infinity and we call it a warped system. Then he showed that an action has spectral gap if and only if for any (equivalently, some) sequence $\{t(n)\}_{n\in \mathbb{N}}$, the corresponding warped system is quasi-isometric to an expander.

In \cite{WillettYu2012higher, Willett2014Geometric}, Willett and Yu introduced a coarse invariant for metric spaces, known as geometric property (T), which serves as an obstruction to the surjectivity of the maximal coarse Baum--Connes assembly map. Roughly speaking, it concerns the existence of spectral gap for the Laplacian in the maximal Roe algebra associated with the space. So, for a coarse disjoint union of finite graphs, geometric property (T) is stronger than being an expander in the sense that the Laplacian has spectral gap in the Roe algebra. They showed that for a box space $X$ of a group $G$, $X$ has geometric property (T) if and only if $G$ has property (T). 


In \cite{Willett2014Geometric}, geometric property (T) was defined for discrete metric spaces, but to study it for warped systems, we adopt the non-discrete analogue formulated in \cite{Jeroen2021Geometric}. If the base space is a compact Riemannian manifold $M$ and the action is Lipschitz, then the associated warped cone can be approximated by a graph with uniformly bounded degree as shown in \cite[Proposition 1.10]{Roe2005Warped}.
Geometric property (T) in the sense of \cite{Jeroen2021Geometric} is equivalent to the geometric property (T) of its discretization in the sense of \cite{Willett2014Geometric}. In \cite[Question 11.2]{Jeroen2021Geometric}, Winkel asked the following question:
\begin{qustn}\label{Winkel Question}
If $G$ has property (T), $M$ is a compact Riemannian manifold and $G\curvearrowright M$ is an ergodic action by diffeomorphisms, then does the warped system have geometric property (T)? 
\end{qustn}
Margulis \cite{Margulis1980Some} and Sullivan \cite{Sullivan1981For} independently proved that for every $d\geq 5$, $SO(d)$ contains a finitely generated subgroup $G$ which is dense in $SO(d)$ and has property (T). The compact Lie group $SO(d)$ ($d\geq 5$) admits a $G$-action by the left multiplication $G\curvearrowright SO(d)$.
The following main result shows that this is a counterexample to Question \ref{Winkel Question}.

\begin{thm}\label{someseqIntro}
 \rm{
    Let $G$ be a finitely generated group which is dense in a compact Lie group $\overline{G}=M$ and $G\curvearrowright M$ an action induced by the left multiplication. In this case, for any sequence $\{t(n)\}_{n \in \mathbb{N}}$ converging to infinity, the warped system $\bigsqcup (X_{t(n)},\delta_G^{t(n)})$ associated to $G\curvearrowright M$ does not have geometric property (T).
}
\end{thm}

While numerous studies, such as those in \cite{Roe2005Warped, Drutu2019Kazhdan, Sawicki2019Warpedrigidity}, have explored the similarities between the large-scale geometric properties of box spaces and warped cones, the above result shows that their behaviors can differ significantly under maximal representations.

In the case when the action has spectral gap, the warped system $\bigsqcup X_{t(n)}$ is quasi-isometric to an expander \cite{Federico2019Measure}. By Theorem \ref{someseqIntro}, the warped systems associated with the left translation action of the groups constructed by Margulis and Sullivan $G$ ($d\geq 5$) on $SO(d)$ is an expander, yet it does not have geometric property (T). Moreover, in \cite{deLaatVigoro2019Superexpanders}, it was showed that the warped system corresponding to the example of Margulis $SO(d,\mathbb{Z}[\frac{1}{5}])\curvearrowright SO(d)$ ($d\geq 5$) is a superexpander (See \cite[Definition 2.2]{SawickiSuperexpandersandWarpedCones}). Therefore, Theorem \ref{someseqIntro} provides examples of superexpanders without geometric property (T).

This paper is organized as follows. In section 2, some basic concepts in coarse geometry as well as the definition of geometric property (T) and warped cones will be briefly recalled. In section 3, we discuss the spectrum of various Laplacians. In section 4, we prove our main theorem. In section 5, we remark that in Theorem \ref{someseqIntro}, if the base space $M$ is replaced by a Cantor set, warped systems can have geometric property (T).

\section{Preliminaries}
In this section, we first fix some notations, and then recall the definition of geometric property (T) for metric spaces and the definition of warped cones associated with group actions.

Through this paper, for any sets $X$ and any subsets $E$ of $X\times X$, we fix the following notations:
    \begin{enumerate}[(i)]
        \item for any positive integer $n$, the composition $E^{\circ n}$ is defined to be
        \begin{align*}
            E^{\circ n}:=\{(x,y)\in X\times X:&\exists (x_0,x_1,\cdots x_n)\in X^{n+1} \text{ such that }\\
            &x=x_0,y=x_n, (x_j,x_{j+1})\in E, \text{ }\forall j=0,1,\cdots n-1 \},
        \end{align*}

        \item and the inverse $E^{-1}$ is defined to be
        \begin{align*}
            E^{-1}:=\{(x,y)\in X\times X: (y,x)\in E\},
        \end{align*}

        \item for $x\in X$, the section $E_x$ of $E$ at $x$ is defined to be
        \begin{align*}
            E_x:=\{y\in X:(x,y)\in E\}.
        \end{align*}
    \end{enumerate}
The above operations on sets are used to study the coarse structure on a set by Roe \cite{Roe:Lecture-coarse-geometry}. To study the large scale geometry by the means of Laplacian, we need the following concepts of controlled sets. 

In this paper, we allow a distance function $d$ on a metric space $X$ to take the value $\infty$.

\begin{dfn}
\rm{
Let $(X,d)$ be a metric space. 
\begin{enumerate}[(i)]
    \item A controlled set is a subset $E$ of $X\times X$ such that
\begin{align*}
    \sup\{d(x,y):(x,y)\in E\}<\infty.
\end{align*}
It is called symmetric if $E=E^{-1}$.

\item A controlled set $E$ is called coarse generating if for any controlled set $E'$ there exists a positive integer $n$ such that $E'\subset E^{\circ n}$.
\end{enumerate}
    }
\end{dfn}

To study the large scale geometry of metric spaces, we need the following concepts of coarse equivalence.  
\begin{dfn}
\rm{
    Let $(X,d_X)$ and $(Y,d_Y)$ be metric spaces. A map $f:X\rightarrow Y$ is called a coarse equivalence if
    \begin{enumerate}[(i)]
        \item there exist two non-decreasing functions $\rho_{+},\rho_{-}:[0,\infty)\rightarrow[0,\infty)$ with $\lim_{t\to\infty}\rho_{\pm}(t)=\infty$, such that
        \begin{align*}
            \rho_{-}(d_X(x,y))\leq d_Y(f(x),f(y))\leq \rho_{+}(d_X(x,y))
        \end{align*}
        for all $x,y\in X$ and

        \item there exists $C\geq 0$ such that for any points $y\in Y$, there exists $x\in X$ such that $d_Y(y,f(x))\leq C$.
    \end{enumerate}
We say that two metric spaces $X$ and $Y$ are coarsely equivalent if there exists a coarse equivalence $f:X\rightarrow Y$. 
Furthermore, if the functions $\rho_{\pm}(t)$ have the form $Lt \pm C$, then the coarse equivalence $f$ is called a quasi-isometry, and the spaces $X$ and $Y$ are said to be quasi-isometric.

    }
\end{dfn}

It is known that geometric property (T) is invariant under coarse equivalences (cf.~\cite[Section 4]{Willett2014Geometric}). When studying metric spaces via operators, we need the following concepts.  
\begin{dfn}
\rm{
Let $(X,d)$ be a metric space with a measure $\mu$.
\begin{enumerate}[(i)]
    \item For $T\in B(L^2(X,\mu))$, we define its support to be
    \begin{align*}
     \text{supp}(T):=\{(x,y)&\in X\times X:\\
     &\chi_VT\chi_U\neq 0\text{ for all open neighborhoods }U\ni x, V\ni y \},
    \end{align*}
    where $\chi_U,\chi_V\in L^{\infty}(X,\mu)\subset B(L^2(X,\mu))$ are the characteristic functions of $U$ and $V$, respectively.
    \item $\ControlledSupport[X]$ is the subset of $B(L^2(X,\mu))$ consisting of all operators in whose support is controlled. When $X$ has bounded geometry with a measure $\mu$ (see Definition \ref{BoundedGeometry}), $\ControlledSupport[X]$ forms a $*$-subalgebra of $B(L^2(X,\mu))$. In this paper, this algebra is sometimes denoted by $\ControlledSupport[(X,d)]$ to emphasize the metric $d$ we are considering.
\end{enumerate}    
}
\end{dfn}

\subsection{Geometric property (T)}
In this subsection, we recall the definition and some properties of geometric property (T), especially for non-discrete spaces as in \cite{Jeroen2021Geometric}. For the non-discrete cases, the geometric property (T) is defined under the existence of a certain measure on the space. We will discuss the relationship between the measure and the coarse structures.

\begin{dfn}
\rm{
    Let $(X,d)$ be a metric space with a measure $\mu$.
    \begin{enumerate}[(i)]
        \item We say that $\mu$ is uniformly bounded if for any $r>0$, we have 
        $$\sup_{x \in X}\mu(B_r(x))<\infty,$$
       where $B_r(x)$ is the ball of radius $r$ centered at $x$.

       \item A controlled set $E\subset X\times X$ is said to be measurable if $\bigcup_{x\in U}E_x \subset X$ is measurable for all measurable $U\subset X$.

        \item A symmetric controlled set $E\subset X\times X$ is called $\mu$-gordo if it is measurable and $\mu(E_x)$ is bounded away from zero independently of $x\in X$.
    \end{enumerate}
    }
\end{dfn}

\begin{dfn}[Proposition 3.7 \cite{Jeroen2021Geometric}]\label{BoundedGeometry}
\rm{
    A metric space $(X,d)$ is said to have bounded geometry if there exist a uniformly bounded measure $\mu$ on $X$, and a symmetric controlled $\mu$-gordo set $E\subseteq X \times X$.
    }
\end{dfn}

For a metric space with bounded geometry, one can define a Laplacian associated with any measurable symmetric controlled set.

\begin{dfn}
\rm{
    Let $(X,d)$ be a metric space with a measure $\mu$ and $E\subset X\times X$ any measurable symmetric controlled set. Define the Laplacian $\Delta_E\in \ControlledSupport[X]$ by
    \begin{align*}
        \Delta_E(\xi)(x):=\int_{E_x}(\xi(x)-\xi(y))d\mu(y)
    \end{align*}
    for all $\xi\in L^2(X,\mu)$ and $x\in X$.
    }
\end{dfn}

The following definition of geometric property (T) was formulated in terms of spectral gap of Laplacians. We remark here that several equivalent definitions of geometric property (T) were given in (cf.~\cite[Definition 6.7, Definition 7.5, Proposition 7.6]{Jeroen2021Geometric}). 

\begin{dfn}[Definition 7.6, \cite{Jeroen2021Geometric}]
\rm{
Let $(X,d)$ be a metric space and let $\mu$ be a uniformly bounded measure for which a gordo set exists. We say that $(X,\mu)$ has geometric property (T) if there exists a measurable symmetric controlled set $E$ and a constant $\gamma>0$ such that for every unital $*$-representation $\rho:\ControlledSupport[X]\rightarrow B(\HH)$, we have
    \begin{align*}
        \sigma(\rho(\Delta_E))\subset \{0\}\cup [\gamma,\infty)
    \end{align*} and
    \begin{align*}
        \ker(\rho(\Delta_E))=\cap\{\ker(\rho(\Delta_F):F\subset X\times X {\text{ is measurable, symmetric and controlled}}\}.
    \end{align*}
    }
\end{dfn}

It is natural to expect that if a measurable symmetric controlled set $E\subset X\times X$ is "large enough", then the second condition of the above definition is automatic. This was formulated and proved in  \cite[Proposition 7.9]{Jeroen2021Geometric}.

\begin{prop}\label{KernelofLaplacians}
    \rm{
    Let $(X,d)$ be a metric space and $\mu$ be a uniformly bounded measure and E be a gordo set that generates the coarse structure on $X$. Let $E':=E^{\circ 3}$. Then for every unital $*$-homomorphism $\rho:\ControlledSupport[X]\rightarrow B(\HH)$, we have 
    \begin{align*}
        \ker(\rho(\Delta_{E'}))=\cap\{\ker(\rho(\Delta_F):F\subset X\times X {\text{ is measurable, symmetric and controlled}}\}.
    \end{align*}
    Moreover, $(X,d)$ has geometric property (T) if and only if there exists a constant $c_0>0$ such that the spectrum of $\rho(\Delta_{E'})$ is contained in $\{0\}\cup [c_0, \infty)$ for any unital $*$-homomorphism $\rho:\ControlledSupport[X]\rightarrow B(\HH)$. 
    }
\end{prop}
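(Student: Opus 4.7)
The plan is to first prove the kernel equality; the spectral-gap equivalence then follows essentially formally. The inclusion $\ker(\rho(\Delta_{E'})) \supseteq \bigcap_F \ker(\rho(\Delta_F))$ is immediate, since $E' = E^{\circ 3}$ is itself measurable, symmetric, and controlled (symmetry and controlledness are preserved under composition, and measurability of $E^{\circ 3}$ follows from a standard projection/Souslin argument on $\{(x,z_1,z_2,y) \in X^4 : (x,z_1),(z_1,z_2),(z_2,y) \in E\}$). For the reverse inclusion, fix a measurable symmetric controlled $F$. Since $E$ coarsely generates the coarse structure, $F \subseteq E^{\circ n}$ for some $n$, and a direct computation from the defining formula gives
\[
\Delta_{E^{\circ n}} \;=\; \Delta_F \;+\; \Delta_{E^{\circ n}\setminus F},
\]
with both summands self-adjoint, lying in $\ControlledSupport[X]$, and positive because the defining sets are symmetric. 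Thus $0 \leq \rho(\Delta_F) \leq \rho(\Delta_{E^{\circ n}})$ for any $*$-representation, and it suffices to prove $\ker \rho(\Delta_{E'}) \subseteq \ker \rho(\Delta_{E^{\circ n}})$ for every $n \geq 3$.

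The central step is to produce an algebraic identity of the form
\[
C_n\, \Delta_{E^{\circ 3}} \;-\; \Delta_{E^{\circ n}} \;=\; \sum_{j} S_j^* S_j
\]
in $\ControlledSupport[X]$, where $C_n$ depends only on $n$ and the gordo constant $c>0$. Applying $\rho$ then yields the operator inequality $\rho(\Delta_{E^{\circ n}}) \leq C_n\, \rho(\Delta_{E^{\circ 3}})$ in every $*$-representation, forcing $\ker \rho(\Delta_{E'}) \subseteq \ker \rho(\Delta_{E^{\circ n}})$. The heuristic for constructing such $S_j$'s is Poincar\'e-type path averaging: for $(x,y) \in E^{\circ n}$ one picks a length-$n$ path $x = z_0, \dots, z_n = y$ in $E$ and exploits the gordo lower bound $\mu(E_{z_i}) \geq c$ to average the triangle inequality for $|\xi(x)-\xi(y)|^2$ over random ``thickened midpoints'' $w_i \in E_{z_i}$, arranging that each consecutive pair $(x,w_1),\ (w_i,w_{i+1}),\ (w_{n-1},y)$ in the averaged bound lies in $E^{\circ 3}$. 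Organising these averages as operators $S_j \in \ControlledSupport[X]$ built from characteristic functions of thickened paths then converts the quadratic-form estimate into the required sum-of-squares decomposition.

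The main obstacle is making this averaging construction algebraically rigorous in the non-discrete, merely measurable setting. In the discrete Willett--Yu framework the algebra has a clean generating set by partial translations and each Laplacian factors as $(I-V)^*(I-V)$ for a partial isometry $V$, turning the path decomposition into an explicit identity; here one must instead use measurable families of ``thick partial translations,'' invoking the gordo hypothesis for uniform lower bounds on the averaging domains and a measurable-selection argument to guarantee that the $S_j$ genuinely lie in $\ControlledSupport[X]$. Once the kernel equality is established, the spectral-gap equivalence closes as follows: a uniform spectral gap for $\rho(\Delta_{E'})$ across all $\rho$, together with the kernel equality, is exactly Definition~7.5 witnessed by $E'$; conversely, any witness $E''$ of geometric property (T) is controlled, so $E'' \subseteq E^{\circ m}$ for some $m$, and the inequality $\rho(\Delta_{E''}) \leq C_m\, \rho(\Delta_{E'})$ together with the kernel equality transfers its gap $\gamma$ to $\rho(\Delta_{E'})$ with constant $\gamma/C_m$.
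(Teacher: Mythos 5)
The paper does not prove this proposition itself; it cites it as Proposition~7.9 of Winkel's paper \cite{Jeroen2021Geometric}. So there is no internal proof to compare against, and I will assess your argument on its own terms.

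Your overall outline---bound $\Delta_F$ by $\Delta_{E^{\circ n}}$ via the disjoint-support decomposition, then dominate $\Delta_{E^{\circ n}}$ by a constant multiple of $\Delta_{E^{\circ 3}}$ by a sum-of-squares identity, then transfer spectral gaps---is the right strategy and matches the structure of Winkel's argument. However, there is a genuine gap at exactly the step you flag yourself: the claimed identity
\[
C_n\,\Delta_{E^{\circ 3}} - \Delta_{E^{\circ n}} = \sum_j S_j^* S_j \quad\text{in } \ControlledSupport[X]
\]
is the entire mathematical content of the proposition, and your ``Poincar\'e-type path averaging'' heuristic does not constitute a proof. The non-discrete, merely measurable setting is where the difficulty lives: one must exhibit honest operators $S_j \in \ControlledSupport[X]$ (not just quadratic-form estimates on $L^2(X,\mu)$), with norms controlled by the gordo constant, and verify that the identity holds at the $*$-algebra level so that it survives an arbitrary $*$-representation $\rho$. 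Acknowledging an obstacle is not the same as clearing it, and until this decomposition is written out the proof is incomplete.

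A secondary but related issue: your claim that $0 \leq \rho(\Delta_F) \leq \rho(\Delta_{E^{\circ n}})$ for every $*$-representation $\rho$ is asserted on the grounds that the kernels are symmetric, and in the paper only $L^2$-positivity of such kernel Laplacians is established (Lemma~\ref{positive}). But $L^2$-positivity does not automatically yield positivity in $\overline{\ControlledSupport[X]}^{\max}$: the $L^2$ representation is not known to be faithful on the maximal completion, and indeed the entire point of geometric property (T) is that it can fail to be. Positivity of $\Delta_F$ as an element of the maximal $C^*$-algebra again requires exhibiting it as a (limit of) sums of squares in $\ControlledSupport[X]$, which is the same kind of measurable-selection/thick-partial-translation work you have deferred. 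In short: the reduction steps are correct, but the two places where you invoke positivity under arbitrary representations are exactly where the actual proof must happen, and neither is carried out.
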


In the non-discrete case, it seems that the definition of geometric property (T) depends on the choice of measures. However, it was proven in \cite{Jeroen2021Geometric} that it does not dependent on the choice of measures, and it is also a coarse invariant (cf.~\cite[Theorem 8.6]{Jeroen2021Geometric}).

To conclude this section, we formulate the following result on the positivity of the approximating operators for Laplacians in the next section. 
\begin{lem}\label{positive}
\rm{
    Let $\alpha:X\times X\rightarrow \mathbb{R}_+$ be a symmetric measurable bounded function such that $\int_X\alpha(x,y)d\mu(y)$ is uniformly bounded and
    $\int_X\int_X|\alpha(x,y)|^2d\mu(x)d\mu(y)<\infty$.
    Then the kernel operator 
    \begin{align*}
        T:L^2(X,\mu)\rightarrow L^2(X,\mu)
    \end{align*}
    given by $T\xi(x)=\int_X\alpha(x,y)(\xi(x)-\xi(y))d\mu(y)$ is positive in $B(L^2(X))$.
    }
\end{lem}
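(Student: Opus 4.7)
The plan is to prove positivity via the standard Dirichlet form calculation, which rewrites the quadratic form $\langle T\xi,\xi\rangle$ in a manifestly nonnegative way using the symmetry of $\alpha$.

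First I would verify that $T$ is a well-defined bounded operator on $L^2(X,\mu)$. I would write $T = M_f - K$, where $f(x) := \int_X \alpha(x,y)\, d\mu(y)$ and $K$ is the integral operator with kernel $\alpha$. By hypothesis $f \in L^\infty(X,\mu)$, so $M_f$ is bounded, and the $L^2(X\times X)$ hypothesis on $\alpha$ makes $K$ a Hilbert--Schmidt operator, in particular bounded. Hence $T \in B(L^2(X,\mu))$.

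Next, for $\xi \in L^2(X,\mu)$ I would compute
\begin{align*}
\langle T\xi,\xi\rangle &= \int_X \int_X \alpha(x,y)\bigl(\xi(x)-\xi(y)\bigr)\overline{\xi(x)}\, d\mu(y)\, d\mu(x).
\end{align*}
The integrand is absolutely integrable: by Cauchy--Schwarz the bound $|\alpha(x,y)\xi(x)\overline{\xi(y)}| + |\alpha(x,y)||\xi(x)|^2$ pairs $\alpha \in L^2(X\times X)$ with $\xi\otimes\xi \in L^2(X\times X)$, and $\alpha(x,y)|\xi(x)|^2$ is controlled by $f(x)|\xi(x)|^2 \in L^1$. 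This justifies Fubini. Applying the change of variables $(x,y)\leftrightarrow(y,x)$ and using the symmetry $\alpha(x,y)=\alpha(y,x)$, I get the alternative expression
\begin{align*}
\langle T\xi,\xi\rangle &= \int_X\int_X \alpha(x,y)\bigl(\xi(y)-\xi(x)\bigr)\overline{\xi(y)}\, d\mu(x)\, d\mu(y).
\end{align*}
Averaging these two identities produces
\begin{align*}
\langle T\xi,\xi\rangle &= \tfrac{1}{2}\int_X\int_X \alpha(x,y)\,|\xi(x)-\xi(y)|^2\, d\mu(x)\, d\mu(y),
\end{align*}
which is nonnegative since $\alpha \ge 0$. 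This gives $T \ge 0$ in $B(L^2(X,\mu))$.

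The only real obstacle is the justification of Fubini and the interchange of integration order needed to pass from the defining formula of $T$ to the symmetric quadratic form; everything else is formal. I would handle this obstacle cleanly by invoking the $L^2$-integrability of $\alpha$ together with the uniform $L^\infty$-bound on $\int\alpha(x,\cdot)\,d\mu$, both of which are built into the hypotheses of the lemma. No other subtleties arise, so the proof is essentially the three-line symmetrization above once boundedness and integrability are secured.
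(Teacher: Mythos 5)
Your proof is correct and uses exactly the same symmetrization argument as the paper (swap $(x,y)\leftrightarrow(y,x)$, use symmetry of $\alpha$, average to get $\frac{1}{2}\int\int\alpha|\xi(x)-\xi(y)|^2$). You also add a brief justification of boundedness and Fubini that the paper leaves implicit; this is a minor, harmless elaboration.
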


\begin{proof}
    For $\xi\in L^2(X)$, since $\alpha$ is symmetric, we have
    \begin{align*}
        \langle \xi, T\xi\rangle&=\int_X \int_{X}\alpha(x,y)(\xi(x)-\xi(y))\overline{\xi(x)}d\mu(y)d\mu(x)\\
        &=\frac{1}{2}\int_X \int_{X}\alpha(x,y)\left((\xi(x)-\xi(y))\overline{\xi(x)}+(\xi(y)-\xi(x))\overline{\xi(y)}\right)d\mu(y)d\mu(x)\\
        &=\frac{1}{2}\int_{X\times X} \alpha(x,y)|\xi(x)-\xi(y)|^2d(\mu\times \mu)(x,y)\geq 0.\qedhere
    \end{align*}
\end{proof}

\subsection{Warped systems}

In this subsection, we shall review the definitions of warped cones and warped systems. The concept of warped cones was introduced by Roe \cite{Roe2005Warped} to associate a coarse space with a group action on a compact metric space, and it encodes certain dynamical properties of the action. A warped system is a discretized version of a warped cone (cf.~\cite[Section 6]{Federico2019Measure}).

\begin{dfn}\label{warped system}\leavevmode
\rm{
\begin{enumerate}[(i)]
    \item Let $(X,d)$ be a proper metric space ane let $G$ be a group generated by a finite symmetric generating set $S \subset G$ acting by homeomorphisms on X. We denote by $\ell$ the associated length function on $G$. The warped distance $d_{G}(x,y)$ between two points $x,y\in X$ is defined to be
    \begin{align*}
       \displaystyle d_{G}(x,y):=\inf \sum \left(d(g_ix_i,x_{i+1})+\ell(g_i)\right),
    \end{align*}
    where the infimum is taken over all finite sequences $x=x_0,x_1,\cdots,x_N=y$ in $X$ and $g_0,g_1,\cdots, g_{N-1}$ in $G$.

    \item Let $(M,d)$ be any metric space and we fix an increasing sequence of positive numbers $\{t(n)\}_n$ with $\lim_{n\to \infty} t(n)=\infty$. For each $n$, let $(M,d^{t(n)})$ be ``the $t(n)$-times enlargement of $(M,d)$'', i.e. the distance $d^{t(n)}$ on $M$ defined by $d^{t(n)}(x,y):=t(n) d(x,y)$. If a finitely generated group $G$ acts on $M$, we can consider the warped distance of $d^{t(n)}$, which is denoted by $\delta_G^{t(n)}$. Then we obtain a sequence of metric spaces $(M,\delta_{G}^{t(n)})$ and a warped system is their disjoint union $\bigsqcup_n (M,\delta_{G}^{t(n)})$. Throughout this paper, by $\bigsqcup_n$, we mean that the distance between different components is taken to be infinite.
\end{enumerate}
    }
\end{dfn}

\begin{rmk}
\rm{
    If $(M,d)$ is a compact manifold, where $d$ is the distance determined by the Riemannian metric $g$ on $M$, Roe (in \cite{Roe2005Warped}) originally defined the warped cone to be the cone $[1,\infty)\times M$ with the warped distance $\delta_G$ of the cone distance $d_\text{cone}$ which is induced from the metric $t^2g+g_{\mathbb{R}}$ ($g_{\mathbb{R}}$ is the standard Euclidean metric). In  \cite[Lemma 6.5]{Federico2019Measure}, Vigolo proved that each $(M,\delta_G^{t(n)})$ is bi-Lipschitz equivalent to $(\{t(n)\}\times M, \delta_G)$ with a Lipschitz constant $C$ independent of $n$.
    }
\end{rmk}
The relationship between the properties of the action $G\curvearrowright M$ and the large scale geometric properties of the warped cone $\mathcal{O}_GM$ was investigated in \cite{Roe2005Warped, Sawicki2019Warpedrigidity, Sawicki2021Straightening}. In the following, we list a result necessary for this paper. 

\begin{lem}[Lemma 11.7, \cite{Jeroen2021Geometric}]\label{Coarse Generating}
\rm{
    Let $M$ be a compact Riemannian manifold with the Riemannian distance $d_M$, and $G$ a group with a finite symmetric generating set containing the identity. If $G$ acts on $M$ satisfying that each $g\in G$ is a Lipschitz homeomorphism on $M$, then for every $r>0$, the symmetric controlled set
    \begin{align*}
        E_r=\{(x,y)&\in (M,\delta^{t(n)}_G)\times (M,\delta^{t(n)}_G):\\
        &\exists~ x'\in M, s\in S \text{ s.t. } t(n)d_M(x,x')<r/2 \text{ and } t(n)d_M(sx',y)<r/2\}
    \end{align*}
    is a coarse generating set for the warped system $\bigsqcup(M,\delta^{t(n)}_G)$.
    }
\end{lem}

It is well known that a warped system is uniformly quasi-isometric to graphs with uniformly bounded degree under the setting of Lemma \ref{Coarse Generating} (see \cite[Proposition 1.10]{Roe2005Warped}). Therefore, we can construct examples of expanders when the group action has spectral gap.

\section{Laplacians}
In this section, we introduce several Laplacians related to warped cones and discuss the relationship between them. Our analysis is conducted in a more general setting than that assumed in the main theorem. Throughout the section, we assume that $M$ is an $m$-dimensional compact Riemannian manifold with the Riemannian distance $d_M$ and the measure $\mu$, and $G$ is a finitely generated group with a fixed symmetric generating set $S=S^{-1}\subset G$ containing the unit $e$. Let $G \curvearrowright M$ be an isometric, free and $\mu$-preserving action.

Let $\{t(n)\}_{n \in\mathbb{N}}$ be a sequence of positive numbers with $\lim_{n\to \infty}t(n)=\infty$, and let $X_{t(n)}:=M\times \{t(n)\}$ for each $n\in \mathbb{N}$. Denote by $\delta_{G}^{t(n)}$ the warped distance, and by $\mu_{t(n)}:=t(n)^m \mu$ on $X_{t(n)}$. We define the warped system corresponding to $\{t(n)\}_n$ by $X=\bigsqcup (X_{t(n)},\delta_G^{t(n)})$. For each $r>0$, we fix a symmetric coarse generating gordo set (defined in Lemma \ref{Coarse Generating}) 
\begin{align}
E_r=
\{(x,y)\in X_{t(n)}\times X_{t(n)}:\exists s\in S \text{ s.t. } t(n)d_M(sx,y)<r\}, \label{E_r}
\end{align}
since the action is isometric.
For each $x\in X$, we have 
$$(E_r)_x=\bigcup_{s\in S} B_{\frac{r}{t(n)}}(sx;d_M),$$
where $B_{\frac{r}{t(n)}}(x;d_M)$
 is the ball centered at $x$ with radius $\frac{r}{t(n)}$ with respect to the original distance $d_M$ of $M$. Since $G \curvearrowright M$ is free, there exists $r>0$ such that $s\cdot B_r(x;d_M)\cap s' \cdot B_r(x;d_M)=\phi$ for $x\in M$ and $s\neq s'\in S$ by the compactness of $M$. In the rest of the paper, we fix this $r$. Then we analyze the coarse Laplacian $\Delta_{E_r}$ associated with this coarse generating gordo set.  For $\xi\in L^2(X_{t(n)},\mu_{t(n)})$, we have that
\begin{align}
\begin{split}
    (\Delta_{E_r}\xi)(x)&=\int_{(E_r)_x} (\xi(x)-\xi(y) )d\mu_{t(n)}(y)\\
    &=\sum_{s\in S}\int_{s\cdot B_{\frac{r}{t_n}}(x;d_M)} (\xi(x)-\xi(y) )d\mu_{t(n)}(y)\\
    &=\sum_{s\in S} \int_{B_{\frac{r}{t_n}}(sx;d_M)} (\xi(x)-\xi(y) ) d\mu_{t(n)}(y)\\  
    &=\sum_{s\in S}t(n)^m \mu(B_{\frac{r}{t(n)}}(sx;d_M))\xi(x)-\sum_{s\in S}\int_{B_{\frac{r}{t(n)}}(sx;d_M)} \xi(y)d\mu_{t(n)}(y). \label{warped Laplacian}
    \end{split}
\end{align}
For each $n$ and the fixed $r$, we define the local Laplacian and the group Laplacian
\begin{align*}
   L_r=\bigoplus_n L_{r,n}&:\bigoplus L^2(X_{t(n)},\mu_{t(n)})\rightarrow \bigoplus L^2(X_{t(n)},\mu_{t(n)}),\\
   \Delta_G=\bigoplus_n \Delta_{G,n}&:\bigoplus L^2(X_{t(n)},\mu_{t(n)})\rightarrow \bigoplus L^2(X_{t(n)},\mu_{t(n)})
\end{align*}
by
\begin{align}\label{DefinitionofLocalandGroupLaplacians}
\begin{split}
(L_{r,n}\xi)(x)&=\int_{B_{\frac{r}{t(n)}}(x;d_M)}(\xi(x)-\xi(y))d\mu_{t(n)}(y)\\
&=t(n)^m\cdot\mu(B_{\frac{r}{t(n)}}(x;d_M))\cdot\xi(x)-\int_{B_{\frac{r}{t(n)}}(x;d_M)}\xi(y)d\mu_{t(n)}(y),\\
(\Delta_{G,n}\xi)(x)&=\sum_{s\in S}(\xi(x)-\xi(sx))=|S|\cdot \xi(x)-\sum_{s\in S}\xi(sx).
\end{split}
\end{align}
This local Laplacian $L_r$ is a Laplacian of $X$ associated to a coarse generating gordo set with respect to the non-warped distances $\bigsqcup (X_{t(n)},d^{t(n)})_{n\in \mathbb{N}}$. 

For each $n$, let us define a function on $M$ by
$$\phi_n(x):=t(n)^m\cdot\mu(B_{\frac{r}{t(n)}}(x;d_M))$$ 
for any $x\in M$.  
Define a function $\phi \in L^{\infty}(\bigsqcup (X_{t(n)},\mu_{t(n)}))$ by $\phi(x):=\phi_n(x)$ for any $x\in X_{t(n)}$. As a result of \eqref{warped Laplacian}, we have the following formula.

\begin{lem}\label{local and group laplacians}
\rm{
Let $\phi$ be the function defined as above, $L_r$ the local Laplacian, $\Delta_G$ the group Laplacian and $\Delta_{E_r}$ the coarse Laplacian. Then we have
\begin{enumerate}
 \item[(1)] $\Delta_{E_r}=|S|\phi-(|S|-\Delta_G)(\phi-L_{r})$;
 \item[(2)] the function $\phi$ and the local Laplacian $L_r$ is $G$-equivariant.
\end{enumerate}
    }
\end{lem}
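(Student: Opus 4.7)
The plan is to derive both statements directly from the formula~\eqref{warped Laplacian} already established for $\Delta_{E_r}$, combined with the isometric and measure-preserving hypotheses on the action $G\curvearrowright M$.

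For part (1), the key observation is that, by the definitions in~\eqref{DefinitionofLocalandGroupLaplacians}, the operator $\phi - L_r$ is simply integration over a ball:
\[
\bigl((\phi - L_r)\xi\bigr)(x) = \int_{B_{r/t(n)}(x;d_M)} \xi(y)\,d\mu_{t(n)}(y).
\]
Since $S = S^{-1}$, one also rewrites the group Laplacian as $((|S|-\Delta_G)\eta)(x) = \sum_{s\in S}\eta(sx)$. Composing the two gives
\[
\bigl((|S|-\Delta_G)(\phi-L_r)\xi\bigr)(x) = \sum_{s\in S}\int_{B_{r/t(n)}(sx;d_M)}\xi(y)\,d\mu_{t(n)}(y),
\]
which is precisely the second term appearing in~\eqref{warped Laplacian}. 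The first term $|S|\phi(x)\xi(x)$ matches $\sum_{s\in S}\phi(sx)\xi(x)$ because $\phi$ is constant on $M$, and subtracting the two pieces yields the claimed identity. This step is essentially algebraic bookkeeping.

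For part (2), I would verify $G$-equivariance by a direct change of variables. Writing $(g\cdot\xi)(x):=\xi(g^{-1}x)$ for $g\in G$, I compute
\[
L_r(g\cdot\xi)(x) = \phi(x)\,\xi(g^{-1}x) - \int_{B_{r/t(n)}(x;d_M)}\xi(g^{-1}y)\,d\mu_{t(n)}(y),
\]
and substitute $z = g^{-1}y$. Isometry of the action gives $g^{-1}\bigl(B_{r/t(n)}(x;d_M)\bigr) = B_{r/t(n)}(g^{-1}x;d_M)$, while measure-preservation gives $d\mu_{t(n)}(y) = d\mu_{t(n)}(z)$. Combined with the constancy of $\phi$, the right-hand side collapses to $(L_r\xi)(g^{-1}x) = (g\cdot L_r\xi)(x)$, which is exactly $G$-equivariance.

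I do not expect any serious obstacle: both parts amount to unpacking~\eqref{warped Laplacian} and~\eqref{DefinitionofLocalandGroupLaplacians}, with the constancy of $\phi$ (already derived from the isometric, measure-preserving, ergodic hypotheses) and the rigidity of the action supplying everything needed for the change-of-variable manipulations.
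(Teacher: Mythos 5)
Your proof is correct and follows essentially the same route as the paper: part (1) is exactly the paper's unwinding of equation~\eqref{warped Laplacian} via the observations that $\phi-L_r$ is integration over the ball and that $\phi$ is $G$-invariant, and part (2) is the change-of-variables argument the paper compresses into the phrase ``follows from the assumption that the action is isometric and $\mu$-preserving.'' The only cosmetic point is that you invoke the \emph{constancy} of $\phi$ (which needs ergodicity) where mere $G$-invariance (from isometry and measure-preservation alone) would suffice in both parts.
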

\begin{proof}
For $\xi\in L^2(X_{t(n)},\mu_{t(n)})$,
\begin{align*}
    (\Delta_{E_r}\xi)(x)&=\sum_{s\in S}\phi_n(sx)\xi(x)-\sum_{s\in S}((\phi_n-L_{r,n})\xi)(sx)\\
    &=(|S|\phi_n\xi)(x)-\left((|S|-\Delta_G)(\phi_n-L_{r,n})\xi\right)(x).
\end{align*}
The $G$-equivariance of $\phi$ and $L_r$ follows from the assumption that the action is isometric and $\mu$-preserving.
\end{proof}

We analyze the Laplacian \(\Delta_{E_r}\) within a certain crossed product algebra, which is related to \(\ControlledSupport[\bigsqcup (X_{t(n)}, \delta_G^{t(n)})]\) via the $*$-homomorphism \(\Psi\) introduced in equation~\eqref{crossed product} in the next section. Since \(\Psi\) becomes an isomorphism modulo the ideal \(I = \bigoplus_n B(L^2(X_{t(n)}))\), we study the spectrum of the localized Laplacian \(L_r\) in the quotient by $I$. 
Recall from Definition \ref{warped system} that the non-warped distance $d^{t(n)}$ on $X_{t(n)}$ is $t(n)\cdot d$ and we denote the disjoint union by $Y=\bigsqcup(X_{t(n)},d^{t(n)})$.

\begin{prop}\label{QuotientofLocalLaplacian}
\rm{
   Let $\{t(n)\}_{n \ni \mathbb{N}}$ be a sequence of positive numbers with $\lim_{n \to \infty} t(n)=\infty$. Then the image of the local Laplacian $ L_r=\bigoplus_n L_{r,n}$ does not have spectral gap in the quotient $$\overline{\ControlledSupport[Y]}^{L^2}/\overline{I},$$ 
 where $I$ is the (algebraic) ideal $\bigoplus_n B(L^2(X_{t(n)}))$ of $\ControlledSupport[Y]$ and $\overline{\ControlledSupport[Y]}^{L^2}$ is the completion of $\ControlledSupport[Y]$ in $B(\bigoplus L^2(X_{t(n)}))$.
    }
\end{prop}
We prove this using the following Lemma \ref{hodge and local Laplacian} to compare the local Laplacian $L_r$ and the Hodge--de Rham Laplacian. Since the proof involves standard kernel estimates and would interrupt the flow of the main argument, we include it in the appendix.

To compare $L_{r,n}$ with $\exp(-s\Delta_M)$,
    in the proof of Proposition \ref{QuotientofLocalLaplacian} and in Lemma \ref{hodge and local Laplacian}, we regard the local Laplacians $L_{r,n}$ as an operator on $L^2(M,\mu)$ by the same formula as \eqref{DefinitionofLocalandGroupLaplacians}
    \begin{align*}
        (L_{r,n}\xi)(x)=\int_{B_{\frac{r}{t(n)}}(x;d_M)}(\xi(x)-\xi(y))t(n)^m d\mu(y).
    \end{align*}

\begin{lem}\label{hodge and local Laplacian}
\rm{
    Let $\Delta_M$ be the scalar Laplace--de Rham operator on $M$, i.e. the restriction of Hodge Laplacian to zero-forms (smooth functions). We consider the operator
    \begin{align*}
        \left(1-\exp{\left(-\frac{\Delta_M}{t(n)^2}\right)}\right)_n:\bigoplus L^2(M,\mu)\rightarrow \bigoplus L^2(M,\mu).
    \end{align*}
    Then for every $r>0$ there exist constants $C,D>0$, depending only on $r$ such that for every $\varepsilon>0$, there exists $R>0$ such that
    \begin{align*}
        0\leq L_{r,n}\leq C\left(1-\exp{\left(-\frac{\Delta_M}{t(n)^2}\right)}\right) \leq DL_{R,n}+\varepsilon
    \end{align*}
    for every $n$.
    }
\end{lem}

\begin{proof}[Proof of Proposition \ref{QuotientofLocalLaplacian}]
    We denote the quotient map by $q:B(\bigoplus L^2(M,\mu))\rightarrow B(\bigoplus L^2(M,\mu))/\overline{\bigoplus B(L^2(M,\mu))}$ and realize $B(\bigoplus L^2(M,\mu))/\overline{\bigoplus B(L^2(M,\mu))}$ as a concrete $C^*$-algebra in some $B(\HH)$. 
    Since by Lemma \ref{hodge and local Laplacian}, $L_r$ is dominated by $\left(1-\exp{\left(-\frac{\Delta_M}{t(n)^2}\right)}\right)_n$, to prove that $q(L_r)$ does not have spectral gap, it suffices to show that
    \begin{enumerate}[(i)]
        \item $\ker\left(q\left(1-\exp{\left(-\frac{\Delta_M}{t(n)^2}\right)}\right)_n\right)=\ker(q(L_r))$ and

        \item $q\left(1-\exp{\left(-\frac{\Delta_M}{t(n)^2}\right)}\right)_n$ does not have spectral gap.
    \end{enumerate}
    
    First, we prove (i). By Lemma \ref{hodge and local Laplacian}, it is clear that 
    \begin{align*}
         \ker\left(q\left(1-\exp{\left(-\frac{\Delta_M}{t(n)^2}\right)}\right)_n\right)\subset \ker(q(L_r)).
    \end{align*}
    On the other hand, if $\xi\in \ker{q(L_r)}\subset \HH$, then for all $\varepsilon>0$, we have
    \begin{align*}
       0\leq  \left\langle \xi,q\left(1-\exp{\left(-\frac{\Delta_M}{t(n)^2}\right)}\right)_n\xi \right\rangle \leq \frac{D}{C}\langle \xi, q(L_R)\xi \rangle+\frac{\varepsilon}{C} \|\xi\|^2=\frac{\varepsilon}{C} \|\xi\|^2
    \end{align*}
    because $\ker(q(L_r))=\ker(q(L_R))$ by Lemma \ref{KernelofLaplacians}. Since $\varepsilon$ is arbitrary, we have $\xi\in \ker\left(q\left(1-\exp{\left(-\frac{\Delta_M}{t(n)^2}\right)}\right)_n\right)$. This shows
    \begin{align*}
         \ker\left(q\left(1-\exp{\left(-\frac{\Delta_M}{t(n)^2}\right)}\right)_n\right)=\ker(q(L_r)).
    \end{align*}

    Next, we prove (ii). The scalar Laplace--de Rham operator $\Delta_M$ admits eigenvalues 
    \begin{align*}
        0=\lambda_0<\lambda_1\leq\lambda_2\leq\cdots\leq \lambda_i\leq
        \cdots \to \infty.
    \end{align*}
    So $\sigma_{L^2(X_t)}(1-\exp(-\frac{\Delta_M}{t^2}))=\left\{1-\exp(-\frac{\lambda_j}{t^2}):j=0,1,\cdots \right\}$.
    
    Let us denote $\sigma_{n,j}:=1-\exp{(-\frac{\lambda_j}{t(n)^2})}$. We show that for any $\varepsilon>0$, the closed interval $[\varepsilon,2\varepsilon]$ contains infinitely many $\frac{\lambda_j}{t(n)^2}$'s. Define a counting function $N:[0,\infty)\to \mathbb{N}$ of eigenvalues of $\Delta_M$ by
    \begin{align*}
        N(R):=\max\{j:\lambda_j\leq R\}.
    \end{align*}
By Weyl's Law (cf.~\cite[Corollary 2.43]{Berline-Getzler-Vergne:Heat=kernels-Dirac}), we know that 
$$\frac{N(R)}{R^{m/2}}\rightarrow C$$ 
for some constant $C$. Therefore, for every suitably large $t(n)$, we have
$N(2\varepsilon t(n)^2)-N(\varepsilon t(n)^2)>1$. So, there exists $j$ such that 
$$\varepsilon t(n)^2<\lambda_j\leq 2\varepsilon t(n)^2.$$   
Therefore, for any $\varepsilon>0$, there exists $0<\delta<\varepsilon$ which is an accumulation point of $\{\sigma_{n,j}\}$. It suffices to show that 
$$\delta\in \sigma\left(q\left(\left(1-\exp(-\frac{\Delta_M}{t(n)^2})\right)_n\right)\right)$$ 
in the quotient $B(\bigoplus L^2(M,\mu))/\overline{\bigoplus B(L^2(M,\mu))}$. If not, for $T_n:=1-\exp(-\frac{\Delta_M}{t(n)^2})-\delta$, there exists $(S_n') \in B(\bigoplus L^2(M,\mu))$ such that  $1_{X_{t(n)}}-T_nS_n'\rightarrow 0$ in norm. Therefore, for large enough $n$, we have $$\|1_{X_{t(n)}}-T_nS_n'\|<\frac{1}{2}.$$ 
As a result, $T_nS_n'$ has inverse whose norm is smaller than $2$. Then $S_n:=S_n'(T_nS_n')^{-1}$ is an inverse of $T_n$. So we have obtained  a sequence $(S_n)_n\in B(\bigoplus L^2(M,\mu))$ such that 
$$\left(1-\exp(-\frac{\Delta_M}{t(n)^2})-\delta\right)S_n=1_{X_{t(n)}}$$
for all but finitely many $n$'s. But for any $K>0$, there exists $n,j$ such that $|\sigma_{n,j}-\delta|<\frac{1}{K}$. On the eigenspace of $\Delta_M\in B(L^2(M,\mu))$ corresponding to $\lambda_j$, $S_n$ is bounded below by $K$. Since $K$ is arbitrary, the sequence $\{S_n\}_{n \in \mathbb{N}}$ can not be bounded, thus is it not an element in $(S_n)$ is not an element in $B\left(\bigoplus L^2(M,\mu)\right)$. This finishes the proof of (ii).
\end{proof}

\section{Proof of the main result}
In this section, we prove the following main result of this paper. 
\begin{thm}\label{subsequence}
    \rm{
    Let $G$ be a finitely generated group which is dense in a compact Lie group $\overline{G}=M$. For the action induced by the left multiplication $G\curvearrowright M$ and any sequence $\{t(n)\}_{n \in \mathbb{N}}$ converging to infinity, the warped system $\bigsqcup (X_{t(n)},\delta_G^{t(n)})$ does not have geometric property (T).
    
}
\end{thm}

We begin under the same setting as the previous section without assuming that $G$ is a dense subgroup of a compact Lie group $\overline{G}=M$ until Lemma \ref{HilberSchmidtSpectrum}. As well as the previous section, we denote by $X=\bigsqcup(X_{t(n)},\delta^{t(n)}_G)$ and $Y=\bigsqcup(X_{t(n)},d^{t(n)})$, the disjoint unions of $X_{t(n)}$ with warped or non-warped distance, respectively.
We use the fact from \cite[Lemma 11.8]{Jeroen2021Geometric} that there is a natural $*$-homomorphism
\begin{align}\label{crossed product}
    \Psi:\ControlledSupport[Y]\rtimes G \rightarrow \ControlledSupport[X].
   \end{align}
   This can be extended to a $*$-homomorphism between maximal completions:
   \begin{align}
    \Psi:\overline{\ControlledSupport[Y]}^{\max}\rtimes_{\max}G \rightarrow \overline{\ControlledSupport[X]}^{\max},
   \end{align}
   which fits into the commutative diagram
\begin{center}
\begin{tikzcd}
\overline{\ControlledSupport[Y]}^{\max}\rtimes_{\max}G \arrow[r, "\Psi"] \arrow[d, "q\otimes 1"]
& \overline{\ControlledSupport[X]}^{\max} \arrow[d, "q" ] \\
\left(\overline{\ControlledSupport[Y]}^{\max}/\overline{I}\right)\rtimes_{\max}G \arrow[r,"\cong"]
&\overline{\ControlledSupport[X]}^{\max}/\overline{I},
\end{tikzcd}
\end{center}
where $I$ is the algebraic direct sum $\bigoplus B(L^2(M,\mu_{t(n)}))$, the right vertical map is the quotient map by $\overline{I}$ and the left vertical map is the map induced by the $G$-equivariant quotient. The bottom horizontal map is an isomorphism, if $G\curvearrowright M$ is free. Let 
$$\Tilde{\Delta}_{E_r}:=|S|\phi-(\phi-L_r)\otimes(|S|-\Delta_G)\in \overline{\ControlledSupport[X]}^{\max}\rtimes_{\max}G.$$
To prove Theorem \ref{subsequence}, it suffices to show that $(q\otimes 1)(\Tilde{\Delta}_{E_r})$ does not have spectral gap in $$\left(\overline{\ControlledSupport[Y]}^{L^2}/\overline{I}\right)\rtimes_{\max}G\cong\left(\overline{\ControlledSupport[Y]}^{L^2}\rtimes_{\max}G\right)/(\overline{I}\rtimes_{\max}G).$$ Here, we omitted the map induced by the quotient map $\overline{\ControlledSupport[Y]}^{\max}/\overline{I}\twoheadrightarrow \overline{\ControlledSupport[Y]}^{L^2}/\overline{I}$. 

Now we construct a covariant system $(\pi,U,\HH)$ of the $C^*$-dynamical system $G\curvearrowright \overline{\ControlledSupport[Y]}^{L^2}$, where $G$ acts on $\overline{\ControlledSupport[Y]}^{L^2}$ by the adjoint which is denoted by $\ad$. 
Let $\omega_n$ be the trace on $B(L^2(X_{t(n)},\mu_{t(n)}))$ and we denote by
$$\HH^{(n)}:=\{T\in B(L^2(X_{t(n)},\mu_{t(n)}):\omega_n(T^*T)<\infty\}$$ the Hilbert space consists of Hilbert-Schmidt class operators.
For a kernel operator $k\in L^2(X_{t(n)}\times X_{t(n)},\mu_{t(n)}\times \mu_{t(n)})$, we have
$$\omega_n(k^*k)=\int_{M\times M}\overline{k(x,y)}k(x,y)d(\mu_{t(n)}\times\mu_{t(n)})(x,y).$$
Then $G$ acts on $\HH^{(n)}$ by conjugation, which is denoted by $U^{(n)}$ and $\overline{\ControlledSupport[Y]}^{L^2}$ is represented on $\HH^{(n)}$ by the left multiplication restricted to $L^2(X_{t(n)},\mu_{t(n)})$, denoted by $\pi^{(n)}$.

We show $(\pi^{(n)},U^{(n)},\HH^{(n)})$ is covariant as follows. Since $\pi^{(n)}$ is normal, it suffices to show that 
$$\pi^{(n)}(\ad(g)S_1)S_2=U_g^{(n)}\pi^{(n)}(S_1)U^{(n)}_{g^{-1}}S_2$$ for any $g\in G$ and any rank-one operators $S_1=\xi_1\otimes \eta_1^*\in B(L^2(X_{t(n)},\mu_{t(n)}))$ and $S_2=\xi_2\otimes \eta_2^*\in \HH^{(n)}$ ($\xi_1,\xi_2,\eta_1,\eta_2\in L^2(X_{t(n)},\mu_{t(n)})$). This follows from the computation
\begin{align*}
    \pi^{(n)}(\ad(g)S_1)S_2&=(g\xi_1\otimes g\eta_1^*)\circ (\xi_2\otimes \eta_2^*)=\langle g\eta_1,\xi_2\rangle(g\xi_1\otimes \eta_2^*)\\
    U_g^{(n)}\pi^{(n)}(S_1)U^{(n)}_{g^{-1}}S_2&=U_g^{(n)}\circ(\xi_1\otimes \eta_1^*)\circ (g^{-1
    }\xi_2\otimes g^{-1}\eta_2^*)=\langle \eta_1,g^{-1}\xi_2 \rangle U_g^{(n)} (\xi_1\otimes g^{-1}\eta_2^*)\\
    &=\langle \eta_1,g^{-1}\xi_2 \rangle (g\xi_1\otimes \eta_2^*).
\end{align*} 
We denote
\begin{align*}
    L^2_G(X_{t(n)}\times X_{t(n)}):=\left\{k\in L^2(X_{t(n)}\times X_{t(n)},\mu_{t(n)}\times \mu_{t(n)}):\begin{array}{ll}
         k(x,y)=k(gx,gy)  \\
         \forall  g\in G,\forall x,y\in M
    \end{array}\right\}.
\end{align*}
This space can be viewed as a closed subspace of $\HH^{(n)}$ isometrically.
Then, $\pi^{(n)}(\Delta_G)=0$ restricted on $L^2_G(X_{t(n)}\times X_{t(n)})$.
For each $k\in L^2_G(X_{t(n)}\times X_{t(n)})$, we can view it as a Hilbert-Schmidt class operator. Then $L_r\circ k$ is again a kernel function whose $(x,y)$-value is equal to
$$(L_r\circ k)(x,y)=\phi(x)k(x,y)-\int_{t(n)M}\chi_{B_{\frac{r}{t(n)}(x)}}(z)k(z,y)d\mu_{t(n)}(z).$$
Therefore, $\pi^{(n)}(L_r)$ can be restricted to $L^2_G(X_{t(n)}\times X_{t(n)})$.

Let $(\pi,U,\HH):=\bigoplus_n (\pi^{(n)},U^{(n)},\HH^{(n)})$ be the covariant system of $G\curvearrowright \overline{\ControlledSupport[Y]}^{L^2}$. Then we obtain a $G$-invariant subspace
$$\HH_G:=\bigoplus L^2_G(X_{t(n)}\times X_{t(n)}).$$ We denote  by $\Tilde{\pi}$ the representation of the crossed product $$\Tilde{\pi}:\overline{\ControlledSupport[Y]}^{L^2}\rtimes G \to B(\HH)$$  associated to the covariant system $(\pi,U,\HH)$.

\begin{lem}\label{HilberSchmidtSpectrum}
    \rm{
    If $M$ is a compact Lie group and $G$ is a finitely generated discrete subgroup of $M$ that is dense in $M$, then there exists a unitary
\begin{align*}
    W:L^2_G(X_{t(n)}\times X_{t(n)}) \rightarrow L^2(X_{t(n)},\mu_{t(n)})
\end{align*}
    such that 
    $$\pi^{(n)}(L_r)|_{L^2_G(X_{t(n)}\times X_{t(n)})}=W^*L_{r,n}W.$$
    Moreover, the restriction 
    $\pi(L_r)|_{\HH_G}$ does not have spectral gap in the quotient algebra 
    \begin{align*}
    B(\HH_G)/\overline{\bigoplus B({L^2_G(X_{t(n)}\times X_{t(n)})})}.    
    \end{align*}
    
    }
\end{lem}

\begin{proof}

Let $e\in M=\overline{G}$ be the identity element. Every $G$-invariant function $k\in L^2_G(X_{t(n)}\times X_{t(n)})$ is determined by the function $x\mapsto k(x,e)$ on $X_{t(n)}$, and it is an element in $L^2(X_{t(n)}, \mu_{t(n)})$. Moreover, this functions can be approximated by continuous functions $f$ on $X_{t(n)}$, each of which can be viewed as a continuous $G$-invariant function $k'$ on $X_{t(n)}\times X_{t(n)}$ and we have
\begin{align*}
    \|k-k'\|^2_{L^2_G(X_{t(n)}\times X_{t(n)})}=&\int_{M\times M} |k(x,y)-k'(x,y)|^2d\mu_{t(n)}(x)\times d\mu_{t(n)}(y)\\
    =&\int_M \int_M |k(z,e)-k'(z,e)|^2d\mu_{t(n)}(z)d\mu_{t(n)}(y)\\
    =&\mu_{t(n)}(X_{t(n)}) \|k(\cdot,e)-f\|_{L^2(X_{t(n)})}.
\end{align*}
Therefore, the set $C_G(X_{t(n)}\times X_{t(n)})$ of continuous $G$-invariant functions is dense in $L^2_G(X_{t(n)}\times X_{t(n)})$. 

Now, we define $W: L^2_G(X_{t(n)}\times X_{t(n)})  \to L^2(X_{t(n)},\mu_{t(n)})$ by the extension of the map
$$C_G(X_{t(n)}\times X_{t(n)})  \to C(X_{t(n)}), \quad k\mapsto\sqrt{\mu_{t(n)}(M)}k(\cdot,e),$$ which can be shown to be an isometry by the same computation as above and clearly $W$ is surjective. Therefore, $W$ is a unitary. 

For all $k,k'\in C_G(X_{t(n)}\times X_{t(n)})$, we have
    \begin{align*}
    &\omega_n\left(\left({k'}^*(\pi^{(n)}(L_r) k\right)\right)\\
    =&\iint_{(x,y)\in M\times M} \overline{k'(x,y)}\phi(x)k(x,y)d(\mu_{t(n)}\times\mu_{t(n)})(x,y)\\
    &-\iint_{(x,y)\in M\times M} \overline{k'(x,y)}\left(\int_{M}\chi_{B_{\frac{r}{t(n)}(x)}}(z)k(z,y)d\mu_{t(n)}(z)\right)d(\mu_{t(n)}\times\mu_{t(n)})(x,y)\\
    =&\int_M\left(\int_M\overline{k'(x,y)}\phi(x)k(x,y)d\mu_{t(n)}(x)\right)d\mu_{t(n)}(y)\\
    &-\int_{M}\left(\int_M \overline{k'(x,y)}\left(\int_{M}\chi_{B_{\frac{r}{t(n)}(x)}}(z)k(z,y)d\mu_{t(n)}(z)\right)d\mu_{t(n)}(x)\right)d\mu_{t(n)}(y).
\end{align*}
Since $G$ is dense in $M$ and the $y$-integrants are a $G$-invariant functions, the above is equal to
\begin{align*}
    &\mu_{t(n)}(M)\left(\int_M \overline{k'(x,e)}\phi(x)k(x,e)d\mu_{t(n)}(x)\right)\\
    &-\mu_{t(n)}(M)\left(\int_M \overline{k'(x,e)}\left(\int_{M}\chi_{B_{\frac{r}{t(n)}(x)}}(z)k(z,e)d\mu_{t(n)}(z)\right)d\mu_{t(n)}(x)\right)\\
   = &\left\langle \sqrt{\mu_{t(n)}(M)}k'(\cdot,e), (L_r)(\sqrt{\mu_{t(n)}(M)}k(\cdot,e)) \right\rangle_{L^2(M,\mu_{t(n)})}=\langle Wk',L_r(Wk) \rangle_{L^2(M,\mu_{t(n)})}
\end{align*}

The second statement follows from Lemma \ref{QuotientofLocalLaplacian}.
\end{proof}

Now, we are ready to prove the main result.    
\begin{proof}[Proof of Theorem \ref{subsequence}]
\rm{
Since there is a quotient 
\begin{align*}
\left(\overline{\ControlledSupport[Y]}^{L^2}\rtimes_{\max}G\right)/(\overline{I}\rtimes_{\max}G)\twoheadrightarrow \overline{\Tilde{\pi}\left(\ControlledSupport[Y]\rtimes G\right)}/\overline{\Tilde{\pi}(I\rtimes G)},
\end{align*}
it suffices that $\Tilde{\pi}(\Tilde{\Delta}_{E_r})$ does not have spectral gap in the quotient by $\bigoplus B(\HH^{(n)})$. Since the subspace $L^2_G(X_{t(n)}\times X_{t(n)})$ consists of $G$-invariant vectors, we have $\Tilde{\pi}(\Delta_G)=0$.
It follows that
$$\Tilde{\pi}(\Delta_{E_r})=|S|\cdot \Tilde{\pi}(L_r)$$ on $\HH_G.$ 
    By Lemma \ref{HilberSchmidtSpectrum}, this operator does not have spectral gap in the quotient by $\bigoplus B(L^2_G(X_{t(n)}\times X_{t(n)}))$. As a result, the operator $\Delta_{E_r}$ does not have spectral gap in $\overline{\ControlledSupport[X]}^{\max}$. This finishes the proof.
   }
\end{proof}

\section{Some Remarks}
In this section, we make two remarks on Theorem \ref{subsequence}. 

First, we show that if $M$ is a Cantor set and the group $G$ has property (T), then there is an isometric, free and measure preserving action on it such that the associated warped system has geometric property (T). 

Next, we discuss the Laplacian $\Delta_{E_r}$ on $L^2(X)$ instead of the maximal completion. If the action $G\curvearrowright M$ has spectral gap (especially if $G$ has property (T) and the action is ergodic, as is expected to give a warped cone with geometric property (T) in \cite[Question 11.2]{Jeroen2021Geometric}), we can easily show that $\Delta_{E_r}$ has spectral gap in $B(\bigoplus L^2(X_{t(n)},\mu_{t(n)}))$ by Lemma \ref{local and group laplacians}.


Let us recall the definition of property (T).
\begin{dfn}
\rm{
    A discrete group $G$ with a finite generating set $S\subset G$ is said to have property (T) if there exists $\varepsilon>0$ such that for any unitary representation $(\pi,\HH)$ of $G$, and for any non-zero vector $\xi  \in \HH^{\perp}_G$, there exists $s\in S\setminus\{e\}$ such that 
    \begin{align*}
        \|\pi(s)\xi-\xi\|\geq\varepsilon\|\xi\|,
    \end{align*}
     where $\HH_{G}$ is the closed subspace consists of all $G$-fixed vectors and $\HH_{G}^{\perp}$ is the orthogonal complement of $\HH_{G}$.
    }
 \end{dfn}

 \begin{rmk}\label{TwoSidedGap}
 \rm{
     Let $S\subset G$ be a finite subset with $e\in S$ and $\pi:G\to \HH$ a unitary representation.
     Assume there exists $\varepsilon>0$ such that for any $\xi\in \HH^{\perp}_G$ there exists $s\in S_0$ such that $\|\pi(s_0)\xi-\xi\|\geq\varepsilon\|\xi\|$. Then there exists $\delta>0$ depending only on 
     $\varepsilon$ such that $$\left\|\sum_{s\in S}\pi(s)\xi\right\|\leq (|S|-\delta)\|\xi\|$$
      for any $\xi\in \HH^{\perp}_G$. 
      
      This can be seen using the uniform convexity of the Hilbert space to
      \begin{align*}
          \left\|\sum_{s\in S}\pi(s)\xi\right\|=\left\|\sum_{s\in S\setminus \{e,s_0\}}\pi(s)\xi\right\|+\|\pi(s_0)\xi\|+\|\xi\|\leq (|S|-2)+\|\pi(s_0)\xi\|+\|\xi\|.
      \end{align*}
     }
 \end{rmk}

 \begin{rmk}
 \rm{
     In contrast to our main theorem, if the base space is a Cantor set $C$, then there is an isometric measure preserving action by some groups such that the associated warped system has geometric property (T). Let $G$ be a finitely generated group with a sequence $\{N_i\}$ of decreasing normal subgroups with finite index. The Cantor set $C$ can be realized as an inverse limit $\varprojlim G/N_i$ of the canonical quotients $G/N_i\twoheadrightarrow G/N_{i+1}$. This set $C$, equipped with the natural $G$-action by translations, admits a $G$-invariant metric and measure. In  \cite[Corollary 7.7]{Sawicki2018Warpedprofinite}, Sawicki showed that there exists a sequence of level sets $\{t(n)\}_{n \in \mathbb{N}}$ such that the coarse disjoint union $\bigsqcup G/N_i$ is quasi isometric to the corresponding warped system $\bigsqcup X_{t(n)}$ for the action $G\curvearrowright \varprojlim G/N_i$. Therefore, combining with the result by Willett and Yu \cite[Theorem 7.3]{Willett2014Geometric}, if $G$ has property (T) then the warped system $\bigsqcup X_{t(n)}$ has geometric property (T). Moreover, if the intersection is trivial $\cap N_i=\{e\}$, then the converse of the above implication is also true. 
    }
 \end{rmk}

Next, we analyze the Laplacian $\Delta_{E_r}$ in $B(L^2(X))$.
\begin{rmk}
\rm{
If the action $G\curvearrowright M$ is free, isometric, measure preserving and has spectral gap, then  $\Delta_{E_r}$ has spectral gap in $B(L^2(X))$. 

By definition, if an action has spectral gap, then it is automatically ergodic. In particular, $\phi_n$ is constant on $X_{t(n)}$ since it is $G$-invariant.
Note that the spectrum of $\Delta_G$ is contained in $\{0\}\cup (\delta,2|S|-\delta)$ for some $\delta>0$ by Remark \ref{TwoSidedGap}. Since $\Delta_G$ and $L_r$ commute, and $\phi_n$ is constant on each $X_{t(n)}$, for a function $f_n(\lambda_1,\lambda_2):=\phi_n|S|-(|S|-\lambda_1)(\phi_n-\lambda_2)$,
 $\Delta_{E_r}|_{L^2(X_{t(n)})}$ admits the joint spectrum decomposition
    \begin{align*}
    \Delta_{E_r}|_{L^2(X_{t(n)})}&=\int_{\sigma(L_{r,n})}\int_{\sigma(\Delta_{G,n})} f_n(\lambda_1,\lambda_2) dE_{\Delta_{G,n}}(\lambda_1)dE_{L_{r,n}}({\lambda_2})\\
    &=\int_{[0,2\phi_n]}\int_{(\delta,2|S|-\delta)} f_n(\lambda_1,\lambda_2) dE_{\Delta_{G,n}}(\lambda_1)dE_{L_{r,n}}({\lambda_2})\\
    &+\int_{[0,2\phi_n]}\int_{\{0\}} f_n(0,\lambda_2) dE_{\Delta_{G,n}}(\lambda_1)dE_{L_{r,n}}({\lambda_2}).
    \end{align*}
    Since on $(\delta,2|S|-\delta)\times [0,2\phi_n]$ we have $f\geq \delta\phi_n$, as illustrated in Figure \ref{fig:lambda-plane}, $\forall\xi \in\ker(\Delta_{G,n})^{\perp}\subset L^2(X_{t(n)})$ we have $\|\Delta_{E_r}\xi\|\geq \delta\phi_n\|\xi\|$. 
    Moreover, by ergodicity, we have 
$\ker(\Delta_{G,n}) = \langle 1_{X_{t(n)}} \rangle 
\subset \ker(L_{r,n})$. It follows that 
$\Delta_{E_r}|_{\ker(\Delta_{G,n})} \equiv 0$ on $L^2(X_{t(n)})$.
Therefore, for any 
$\xi \in \ker(\Delta_{E_r}|_{L^2(X_{t(n)})})^{\perp}$, we obtain
\[
    \|\Delta_{E_r} \xi\| \geq \delta \phi_n \|\xi\|.
\]
Finally, note that $\phi_n$ is uniformly bounded below, since it converges 
to the volume of the ball of radius $r$ in the Euclidean space whose 
dimension is equal to that of $M$. 

Therefore, the Laplacian $\Delta_{E_r}$ has spectral gap in $B(L^2(X))$.
\begin{figure}[ht]
\centering
\begin{tikzpicture}[scale=1.0]
    \draw[->,>=stealth,semithick](-0.5,0)--(8,0)node[right]{$\lambda_1$};
    \draw[->,>=stealth,semithick](0,-0.5)--(0,4.75)node[left]{$\lambda_2$};
    \fill[red](1.5,0)--(4.5,0)--(4.5,2)--(1.5,2)--cycle;
    \draw[densely dotted](6,0)--(6,3);
    \draw[densely dotted](0,2)--(8,2);
    \draw[densely dotted](3,-2)--(3,4.75);
    \draw[densely dotted](-1,1)--(8,1);
    \draw(0,0)node[below left]{0};
    \draw(6,0)node[below]{$2|S|$};
    \draw(4.5,0)node[below]{$2|S|-\delta$};
    \draw(1.5,0)node[below]{$\delta$};
    \draw(0,2)node[left]{$2\phi_n$};
    \draw[blue](4.5,3)node[right]{$f_n=0$};
    \draw[blue](1.5,-1)node[left]{$f_n=0$};
    \draw[blue, domain=3.8:8] plot({\x},{1-3/(3-\x)});
    \draw[blue, domain=-1:2.0] plot({\x},{1-3/(3-\x)});
    \filldraw[red] (0,0) circle (4pt);
\end{tikzpicture}
\caption{Zeros of $f_n$ in $\sigma(\Delta_{G,n})\times \sigma(L_{r,n})$-plane}
\label{fig:lambda-plane}
\end{figure}
}
\end{rmk}

Our main result focuses on the cases of actions on a compact Lie group by a finitely generated dense subgroup, so it is natural to loosen the assumption on actions and ask the following questions.
\begin{qustn}
Is there a Lipschitz action $G\curvearrowright M$ by a finitely generated group $G$ on a compact manifold $M$ such that the associated warped system has geometric property (T)?
\end{qustn}

\appendix
\renewcommand{\thesection}{Appendix \Alph{section}}
\section*{Appendix: Proof of Lemma \ref{hodge and local Laplacian}}
\addcontentsline{toc}{section}{Appendix: Proof of Lemma \ref{hodge and local Laplacian}}

In this appendix, we provide a proof of Lemma \ref{hodge and local Laplacian}. The spectral relationship between the Hodge Laplacian and the local Laplacian was found in \cite[Lemma 10.3]{Jeroen2021Geometric}, but we need to relate the parameter $t(n)$ of the warped system and $s$ of the heat kernel.



\begin{proof}[Proof of Lemma \ref{hodge and local Laplacian}]
First, we prove that there exists $C>0$ such that 
    \begin{align*}
         L_{r,n}\leq C\left(1-\exp{\left(-\frac{\Delta_M}{t(n)^2}\right)}\right).
    \end{align*}
   We denote by $k_s$ the heat kernel of $\Delta_M$, so we have that
    $$\left(1-\exp(-s\Delta_M)\right)\xi=\int_M (\xi(x)-\xi(y)) k_s(x,y)d\mu(y)$$ 
    for $\xi\in L^2(M,\mu)$. 
We have the asymptotic estimate of the heat kernel by \cite[Theorem 7.15]{Roe1998Elliptic}
    \begin{equation}\label{HeatKernelEstimate}
        k_s(x,y)\sim \frac{1}{(4\pi s)^{m/2}}\exp{\left(-\frac{d(x,y)^2}{4s}\right)}(a_0(x,y)+a_1(x,y)s+\cdots)
    \end{equation}
    with $a_0,a_1,\cdots \in C^{\infty}(M\times M)$ satisfying $a_0(x,x)=1$ for all $x\in M$.
    So there exists $\ell\in \mathbb{N}$ and $C'>0$ such that for all $x,y\in M$ and $s\leq 1$, we have
    \begin{align*}
        \left|k_s(x,y)- p_s(x,y)(a_0(x,y)+a_1(x,y)s+\cdots a_{\ell}(x,y)s^{\ell})\right|\leq C' s,
    \end{align*}
    where we denote $p_s(x,y):=\frac{1}{(4\pi s)^{m/2}}\exp{\left(-\frac{d(x,y)^2}{4s}\right)}$.
    Fix $s_0$ small enough so that
    \begin{enumerate}[(a)]
        \item $C's_0\leq \frac{1}{(4\pi s_0)^{m/2}}\exp{\left(-\frac{r^2}{4}\right)}$;

        \item $\left|1-\left(a_0(x,y)+a_1(x,y)s+\cdots +a_{\ell}(x,y)s^{\ell}\right)\right|\leq \frac{1}{3}$ for every positive $s\leq s_0$.
    \end{enumerate}
    Such an $s_0$ exists because $M$ is compact and $a_i$ are continuous.
    For every $s\leq s_0$ if $d(x,y)<\sqrt{s}r$, then we have
    \begin{align*}
        &\left|p_s(x,y)-k_s(x,y)\right|\\
        \leq&\left|p_s(x,y)-p_s(x,y)(a_0(x,y)+a_1(x,y)s+\cdots a_{\ell}(x,y)s^{\ell})\right|\\
       & \quad\quad+\left|p_s(x,y)\left(a_0(x,y)+a_1(x,y)s+\cdots a_{\ell}(x,y)s^{\ell}\right)-k_s(x,y)\right|\\
       \leq & \frac{2}{3} p_s(x,y)
    \end{align*}
    and so $\frac{1}{3}\frac{1}{(4\pi s)^{m/2}}\exp{\left(-\frac{r^2}{4}\right)}\leq \frac{1}{3}p_s(x,y)\leq k_s(x,y) $.
    Therefore, since $k_s$ is positive everywhere we have 
    \begin{align*}
    s^{m/2}\chi_{B_{\sqrt{s}r}(x;d_M)}(y)\leq 3(4\pi)^{m/2}\exp{\left(\frac{r^2}{4}\right)}k_s(x,y).
    \end{align*}
    Applying this to Lemma \ref{positive}, we have 
    \begin{align}\label{DifferentialandLocalLaplacian}
        0\leq L_{r,n}\leq C\left(1-\exp{\left(-\frac{\Delta_M}{t(n)^2}\right)}\right)
    \end{align}
    for $C=3(4\pi)^{m/2}\exp{\left(\frac{r^2}{4}\right)}$.
    
    Next, we prove that there exists $D>0$ such that for any $\varepsilon>0$, there exists $R$ such that
    \begin{align}\label{second inequality}
        1-\exp{\left(-\frac{\Delta_M}{t(n)^2}\right)}\leq DL_{R,n}+\varepsilon.
    \end{align}
     For $s>0$ and $R>0$, we define a function $k_{s}^{(R)}:M\times M\to R$ by
    \begin{align*}
        {k}^{(R)}_{s}(x,y)&=\left\{
        \begin{array}{ll}
            k_s(x,y) & d(x,y)<\sqrt{s}R  \\
             0 & d(x,y)\geq \sqrt{s}R,
        \end{array}
        \right.
    \end{align*}
    and the corresponding Laplacian
    \begin{align*}
        \Delta_{k}^{(R)}=\bigoplus_n \Delta_{k,t(n)}^{(R)}: \bigoplus L^2(M,\mu)\rightarrow \bigoplus L^2(M,\mu)
    \end{align*}
    is defined by 
    \begin{align*}
        \left(\Delta_{k,t(n)}^{(R)}\xi\right)(x)&=\int_{M}{k}^{(R)}_{\frac{1}{t(n)^2}}(x,y)(\xi(x)-\xi(y)) d\mu(y).
    \end{align*}
    We show that for all $\varepsilon>0$, there exists $R>0$ such that 
    \begin{align}\label{deRhamleqlocal}
        0\leq 1-\exp{\left(-\frac{\Delta_M}{t(n)^2}\right)}\leq \Delta_k^{(R)}+\varepsilon.
    \end{align}
    Note that by $\eqref{HeatKernelEstimate}$, there exists a constant $D>0$ such that 
    \begin{align*}
        0\leq k_s(x,y)\leq Dp_s(x,y)
    \end{align*}
    for $s\leq \frac{1}{t(1)^2}$. Now for $\xi\in L^2(M,\mu)$, by the symmetry of heat kernel, we have
    \begin{align*}
        &\left\langle\left(1-\exp{\left(-\frac{\Delta_M}{t(n)^2}\right)}-\Delta_{k,t(n)}^{(R)}\right)\xi,\xi\right\rangle\\
        =& \int_M\int_{d(y,x)\geq \frac{R}{t(n)}} {k}_{\frac{1}{t(n)^2}}(x,y)(\xi(x)-\xi(y))\overline{\xi(x)} d\mu(y)d\mu(x)\\
        =&\frac{1}{2} \int_M\int_{d(y,x)\geq \frac{R}{t(n)}} {k}_{\frac{1}{t(n)^2}}(x,y)\left((\xi(x)-\xi(y))\overline{\xi(x)}+(\xi(y)-\xi(x))\overline{\xi(y)} \right)d\mu(y)d\mu(x)\\
        =& \frac{1}{2}\int_M\int_{d(y,x)\geq \frac{R}{t(n)}} {k}_{\frac{1}{t(n)^2}}(x,y) |\xi(x)-\xi(y)|^2d\mu(y)d\mu(x)\\
        \leq&\frac{D}{2}\int_M \int_{d(y,x)\geq \frac{R}{t(n)}} {p}_{\frac{1}{t(n)^2}}(x,y)|\xi(x)-\xi(y)|^2d\mu(y)d\mu(x)\\
        \leq &\frac{D}{2}\int_{\left\{(x,y)\in M\times M:\substack{|\xi(x)|\geq |\xi(y)|\\d(x,y)\geq R/t(n)}\right\}} {p}_{\frac{1}{t(n)^2}}(x,y) (2|\xi(x)|)^2 d(\mu\times\mu)(x,y)\\
        &+ \frac{D}{2}\int_{\left\{(x,y)\in M:\substack{|\xi(x)|\leq |\xi(y)|\\d(x,y)\geq R/t(n)}\right\}}  {p}_{\frac{1}{t(n)^2}}(x,y) (2|\xi(y)|)^2 d(\mu\times\mu)(x,y)\\
        \leq& 4D\int_M \int_{d(y,x)\geq \frac{R}{t(n)}}{p}_{\frac{1}{t(n)^2}}(x,y) |\xi(x)|^2 d\mu(y)d\mu(x)\\
        \leq&4D\sup_{x\in M} \left\{\int_{d(y,x)\geq \frac{R}{t(n)}} \frac{t(n)^{m}}{(4\pi)^{m/2}}\exp{\left(-\frac{t(n)^2d(x,y)^2}{4}\right)} d\mu(y) \right\}\|\xi\|_{L^2(M,\mu)}^2
    \end{align*}
    but the coefficient of $\|\xi\|_{L^2(M,\mu)}^2$ converges to $0$ when $R$ goes to infinity independently of $n$ by the change of variable in the integration. Now \eqref{deRhamleqlocal} is proved.
    
    Since $k_s^{(R)}(x,y)\leq  \frac{D}{(4\pi s)^{m/2}}\chi_{B_{\sqrt{s}R}(x;d_M)}(y)$, by Lemma \ref{positive} we have $\Delta_k^{(R)}\leq D L_{R,n}$. Combining this with \eqref{deRhamleqlocal}, we obtain the desired estimate \eqref{second inequality}.
\end{proof}

\section*{Acknowledgements} 

We would like to thank Prof. Guoliang Yu for his comments and discussions on this topic. We also appreciate Prof. Hanfeng Li for his comments.

\bibliographystyle{alpha}
\bibliography{main}

\begin{thebibliography}{BGV92}

\bibitem[BGV92]{Berline-Getzler-Vergne:Heat=kernels-Dirac}
Nicole Berline, Ezra Getzler, and Mich\`ele Vergne.
\newblock {\em Heat kernels and {D}irac operators}, volume 298 of {\em Grundlehren der mathematischen Wissenschaften [Fundamental Principles of Mathematical Sciences]}.
\newblock Springer-Verlag, Berlin, 1992.

\bibitem[dLV19]{deLaatVigoro2019Superexpanders}
Tim de~Laat and Federico Vigolo.
\newblock Superexpanders from group actions on compact manifolds.
\newblock {\em Geom. Dedicata}, 200:287--302, 2019.

\bibitem[DN19]{Drutu2019Kazhdan}
Cornelia Dru\cb{t}u and Piotr~W. Nowak.
\newblock Kazhdan projections, random walks and ergodic theorems.
\newblock {\em J. Reine Angew. Math.}, 754:49--86, 2019.

\bibitem[Lub94]{Lubotzky1994Discrete}
Alexander Lubotzky.
\newblock {\em Discrete groups, expanding graphs and invariant measures}, volume 125 of {\em Progress in Mathematics}.
\newblock Birkh\"auser Verlag, Basel, 1994.
\newblock With an appendix by Jonathan D. Rogawski.

\bibitem[Mar80]{Margulis1980Some}
G.~A. Margulis.
\newblock Some remarks on invariant means.
\newblock {\em Monatsh. Math.}, 90(3):233--235, 1980.

\bibitem[Roe98]{Roe1998Elliptic}
John Roe.
\newblock {\em Elliptic operators, topology and asymptotic methods}, volume 395 of {\em Pitman Research Notes in Mathematics Series}.
\newblock Longman, Harlow, second edition, 1998.

\bibitem[Roe03]{Roe:Lecture-coarse-geometry}
John Roe.
\newblock {\em Lectures on coarse geometry}.
\newblock Number~31. American Mathematical Soc., 2003.

\bibitem[Roe05]{Roe2005Warped}
John Roe.
\newblock Warped cones and property {A}.
\newblock {\em Geom. Topol.}, 9:163--178, 2005.

\bibitem[Saw18]{Sawicki2018Warpedprofinite}
Damian Sawicki.
\newblock Warped cones over profinite completions.
\newblock {\em J. Topol. Anal.}, 10(3):563--584, 2018.

\bibitem[Saw19]{Sawicki2019Warpedrigidity}
Damian Sawicki.
\newblock Warped cones, (non-)rigidity, and piecewise properties.
\newblock {\em Proc. Lond. Math. Soc. (3)}, 118(4):753--786, 2019.
\newblock With an appendix by Dawid Kielak and Sawicki.

\bibitem[Saw20]{SawickiSuperexpandersandWarpedCones}
Damian Sawicki.
\newblock Super-expanders and warped cones.
\newblock {\em Ann. Inst. Fourier (Grenoble)}, 70(4):1753--1774, 2020.

\bibitem[Sul81]{Sullivan1981For}
Dennis Sullivan.
\newblock For {$n>3$}\ there is only one finitely additive rotationally invariant measure on the {$n$}-sphere defined on all {L}ebesgue measurable subsets.
\newblock {\em Bull. Amer. Math. Soc. (N.S.)}, 4(1):121--123, 1981.

\bibitem[SW21]{Sawicki2021Straightening}
Damian Sawicki and Jianchao Wu.
\newblock Straightening warped cones.
\newblock {\em J. Topol. Anal.}, 13(4):933--957, 2021.

\bibitem[Vig19]{Federico2019Measure}
Federico Vigolo.
\newblock Measure expanding actions, expanders and warped cones.
\newblock {\em Trans. Amer. Math. Soc.}, 371(3):1951--1979, 2019.

\bibitem[Win21]{Jeroen2021Geometric}
Jeroen Winkel.
\newblock Geometric property ({T}) for non-discrete spaces.
\newblock {\em J. Funct. Anal.}, 281(8):Paper No. 109148, 36, 2021.

\bibitem[WY12]{WillettYu2012higher}
Rufus Willett and Guoliang Yu.
\newblock Higher index theory for certain expanders and {G}romov monster groups, {II}.
\newblock {\em Adv. Math.}, 229(3):1762--1803, 2012.

\bibitem[WY14]{Willett2014Geometric}
Rufus Willett and Guoliang Yu.
\newblock Geometric property ({T}).
\newblock {\em Chinese Ann. Math. Ser. B}, 35(5):761--800, 2014.

\end{thebibliography}
\end{document}